\pdfoutput=1
\documentclass{article}
\usepackage[utf8]{inputenc}
\usepackage{amsmath}
\usepackage{amssymb}
\usepackage{amsthm}
\usepackage{mathtools}
\usepackage{hyperref}
\usepackage{comment}
\usepackage[titletoc,title]{appendix}
\hypersetup{colorlinks=true}
\usepackage[totalwidth=480pt, totalheight=680pt]{geometry}

\title{Fully Faithful Functors and Dimension}
\author{Noah Olander}
\date{}

\newtheorem{thm}{Theorem}

\newtheorem{prop}{Proposition}
\newtheorem{lemma}{Lemma}

\theoremstyle{definition}
\newtheorem{definition}{Definition}

\newtheorem*{remark}{Remark}

\begin{document}

\maketitle 

\begin{abstract}
    We define the countable Rouquier dimension of a triangulated category and use this notion together with Theorem 2 of \cite{olander2021rouquier} to prove that if there is a fully faithful embedding $D^b_{coh}(X) \subset D^b_{coh}(Y)$ with $X, Y$ smooth proper varieties, then $\mathrm{dim}(X) \leq \mathrm{dim}(Y)$. 
\end{abstract}

We will show how Theorem \ref{thm-main} follows from \cite[Theorem 2]{olander2021rouquier}. Theorem \ref{thm-main} was expected to be true by Conjecture 10 of \cite{Orl09}, but to the best of the author's knowledge, it was unknown until now. The author is very grateful to Dmitrii Pirozhkov for suggesting this proof almost immediately after the paper \cite{olander2021rouquier} was posted. The author also thanks Dmitri Orlov for a helpful email. The key is the following definition due to Pirozhkov:

\begin{definition} Let $\mathcal{T}$ be a triangulated category. The \emph{countable Rouquier dimension of} $\mathcal{T}$, denoted $\mathrm{CRdim}(\mathcal{T})$, is the smallest $n$ such that there exists a countable set $\{E_i\}_{i \in I}$ of objects of $\mathcal{T}$ such that $\mathcal{T} = \langle \{E_i\}_{i \in I} \rangle _{n+1}$.
\end{definition}

We allow the countable Rouquier dimension to be infinity. We have an easy lemma:

\begin{lemma}
\label{lemma-admissible}
Let $\mathcal{A} \subset \mathcal{T}$ be an admissible subcategory in a triangulated category. Then $\mathrm{CRdim}(\mathcal{A}) \leq \mathrm{CRdim}(\mathcal{T})$.
\end{lemma}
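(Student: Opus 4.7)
The plan is to transport a countable generating set from $\mathcal{T}$ down to $\mathcal{A}$ via the projection functor that exists because $\mathcal{A}$ is admissible. Let $n = \mathrm{CRdim}(\mathcal{T})$ (we may assume this is finite, else there is nothing to prove), and fix a countable family $\{E_i\}_{i \in I}$ of objects of $\mathcal{T}$ with $\mathcal{T} = \langle \{E_i\}_{i \in I} \rangle_{n+1}$. Since $\mathcal{A}$ is admissible, the inclusion $\iota : \mathcal{A} \hookrightarrow \mathcal{T}$ admits a right adjoint $\pi : \mathcal{T} \to \mathcal{A}$, which is automatically a triangulated functor, and the counit furnishes a natural isomorphism $\pi \circ \iota \cong \mathrm{id}_{\mathcal{A}}$ (fullness of $\iota$).

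I will propose $\{\pi(E_i)\}_{i \in I}$ as a countable generating set exhibiting $\mathrm{CRdim}(\mathcal{A}) \leq n$. The key general fact I will invoke is that any triangulated functor $F : \mathcal{T} \to \mathcal{T}'$ satisfies $F(\langle S \rangle_k) \subset \langle F(S) \rangle_k$ for every subset $S \subset \mathcal{T}$ and every $k \geq 1$: this is immediate by induction on $k$ since $\langle S \rangle_k$ is built from $S$ using operations (shifts, finite direct sums, cones, direct summands) that are all preserved by $F$.

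Given any $A \in \mathcal{A}$, view $A$ as an object of $\mathcal{T}$ via $\iota$; by hypothesis $\iota(A) \in \langle \{E_i\}_{i \in I} \rangle_{n+1}$. Applying $\pi$ and using the general fact just stated,
\[
A \;\cong\; \pi(\iota(A)) \;\in\; \pi\bigl(\langle \{E_i\}_{i \in I} \rangle_{n+1}\bigr) \;\subset\; \langle \{\pi(E_i)\}_{i \in I} \rangle_{n+1}.
\]
Therefore $\mathcal{A} = \langle \{\pi(E_i)\}_{i \in I} \rangle_{n+1}$, which gives $\mathrm{CRdim}(\mathcal{A}) \leq n = \mathrm{CRdim}(\mathcal{T})$, as desired.

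I do not anticipate a serious obstacle here: the only two points to verify are the existence of the triangulated right adjoint (which is exactly the definition of admissibility) and the compatibility of the $\langle - \rangle_k$ construction with triangulated functors (which is a routine induction on $k$). The countability of the index set is preserved automatically since we apply $\pi$ pointwise.
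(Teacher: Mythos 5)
Your proof is correct and is essentially the paper's argument: the paper also applies the right adjoint $R$ of the inclusion to the generating family, using that $R$ is essentially surjective (equivalently, $R\circ\iota\cong\mathrm{id}$) and that exact functors respect the $\langle - \rangle_{n+1}$ construction. You simply spell out the details the paper leaves implicit.
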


\begin{proof}
Let $R$ be the right adjoint of the inclusion. If $\mathcal{T} = \langle \{E_i \}_{i \in I} \rangle _{n+1}$ then since $R$ is essentially surjective, $\mathcal{A} = \langle \{R(E_i)\}_{i \in I} \rangle _{n+1}$.
\end{proof}

The remark following the proof of \cite{olander2021rouquier} directly implies:

\begin{thm}
\label{thm-paper}
Let $X$ be a Noetherian regular scheme with affine diagonal. Then $\mathrm{CRdim}(D^b_{coh}(X)) \leq \mathrm{dim}(X)$. 
\end{thm}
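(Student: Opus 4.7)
The plan is to deduce this directly from the construction in the proof of \cite[Theorem 2]{olander2021rouquier} together with the remark that follows it. That theorem produces, from the affine-diagonal hypothesis and a fixed finite affine open cover of $X$, an explicit $(\dim(X)+1)$-step build of any $F \in D^b_{coh}(X)$ out of a family of ``building block'' objects coming from the local pieces of the cover; the remark records the crucial observation that this family may be chosen uniformly in $F$ and countable, which is exactly what is needed to translate the bound into the language of $\mathrm{CRdim}$.

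Concretely, I would proceed in three steps. First, recall the argument of \cite[Theorem 2]{olander2021rouquier}: the affine-diagonal hypothesis furnishes a length-$\dim(X)$ resolution of $\Delta_* \mathcal{O}_X$ whose terms are pushforwards from intersections of the chosen cover, and convolution with $F$ yields the desired iterated-cone expression. Second, identify the objects appearing as building blocks: each is of the form $j_* G$ for an affine open immersion $j\colon U \to X$ taken from the cover and $G \in D^b_{coh}(U)$; since $U$ is Noetherian, regular, and affine, $D^b_{coh}(U)$ is classically generated by a single object (for instance, by $\mathcal{O}_U$ via Hilbert's syzygy theorem applied to the regular Noetherian ring $\Gamma(U, \mathcal{O}_U)$), and the cover is finite, so only countably many such building blocks are required. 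Third, package these into a single countable set $\{E_i\}$ with $D^b_{coh}(X) = \langle \{E_i\} \rangle_{\dim(X)+1}$, and invoke the definition of countable Rouquier dimension.

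The main, and essentially only, obstacle is the bookkeeping alluded to in the remark: one must verify that a single countable family of building blocks, chosen once and for all from the local generators on the pieces of the cover, can be used in the iterated-cone expression for \emph{every} $F \in D^b_{coh}(X)$ without introducing additional, $F$-dependent objects. This is exactly what the construction in \cite{olander2021rouquier} provides, and once it is granted the result is immediate from the definition.
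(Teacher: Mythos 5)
Your sketch does not reconstruct what \cite[Theorem 2]{olander2021rouquier} actually provides, and the argument you substitute for it has a genuine gap. The paper's proof of this theorem is nothing more than the citation: Theorem 2 of \cite{olander2021rouquier} says that for each individual $F \in D^b_{coh}(X)$ there is an object $E$, \emph{depending on} $F$, with $F \in \langle E \rangle_{\mathrm{dim}(X)+1}$, and the remark following its proof supplies the extra information that these $E$ may be drawn from a countable set --- which is exactly what $\mathrm{CRdim}$ asks for. It is not proved by, and does not assert, a length-$\mathrm{dim}(X)$ resolution of $\Delta_*\mathcal{O}_X$ by pushforwards from a finite affine cover: a \v{C}ech-type resolution attached to a cover has length governed by the number of affines in the cover, not by $\mathrm{dim}(X)$, and nothing in the affine-diagonal hypothesis produces a cover by $\mathrm{dim}(X)+1$ affine opens for a general Noetherian regular scheme.

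The decisive gap is the step where you pass from the building blocks $j_*G$, $G \in D^b_{coh}(U)$, to a countable family ``because $D^b_{coh}(U)$ is classically generated by $\mathcal{O}_U$''. Classical generation carries no bound on the number of cones: Hilbert's syzygy theorem gives $G \in \langle \mathcal{O}_U \rangle_{\mathrm{dim}(U)+1}$, not $G \in \langle \mathcal{O}_U \rangle_1$, and the latter is what you would need to keep the total count at $\mathrm{dim}(X)+1$ after convolving with your resolution; $G \in \langle \mathcal{O}_U \rangle_1$ holds only when the cohomology modules are projective, which is precisely the special situation arranged at a well-chosen point in the proof of Proposition \ref{prop-complete}. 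Using the syzygy bound instead multiplies the two counts and gives an estimate of the shape of Rouquier's classical one, far above $\mathrm{dim}(X)+1$. Moreover, classical generation by one object does not bound the number of isomorphism classes, so it does not by itself yield countability (consider the skyscraper sheaves on $\mathbf{A}^1_{\mathbf{C}}$). Finally, note that if your argument worked as stated it would exhibit a single object (the direct sum of the finitely many $j_{I*}\mathcal{O}_{U_I}$) with $D^b_{coh}(X) = \langle E \rangle_{\mathrm{dim}(X)+1}$, i.e.\ it would prove the open conjecture that Rouquier dimension equals dimension --- much stronger than the statement at hand and than \cite[Theorem 2]{olander2021rouquier}. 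This is exactly why the generator there must depend on $F$, and why the countable-family notion $\mathrm{CRdim}$ is the correct intermediary.
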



In fact Theorem \ref{thm-paper} is true without the assumption on the diagonal of $X$ but we don't need it here. The reverse inequality is not true in general. For example, if $X$ is a variety over a countable field $k$, then $D^b_{coh}(X)$ has only countably many objects up to isomorphism, hence in fact $\mathrm{CRdim}(D^b_{coh}(X)) = 0$. However for varieties over $\mathbf{C}$ we do get the reverse inequality:

\begin{prop}
\label{prop-complete}
Let $k$ be an uncountable field. Let $X$ be a reduced scheme of finite type over $k$. Then $\mathrm{CRdim}(D^b_{coh}(X)) \geq \mathrm{dim}(X)$. 
\end{prop}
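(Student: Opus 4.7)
The plan is to adapt Rouquier's classical proof of $\dim D^b_{coh}(X) \geq \dim X$ to countable generating sets, exploiting the uncountability of $k$ to produce a single obstruction point $x$ that works simultaneously against every $E_i$. Let $d = \dim X$; the case $d = 0$ is trivial, so assume $d \geq 1$. Choose an irreducible component $X_1$ of dimension $d$ and a dense open smooth subscheme $U \subset X_1$ that avoids the other components, which exists since $X$ is reduced.

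Suppose $D^b_{coh}(X) = \langle \{E_i\}_{i \in I}\rangle_{n+1}$ with $I$ countable; we aim to show $n \geq d$. For each $i$, let $V_i \subset U$ be the dense open locus where every cohomology sheaf of $E_i$ is locally free. Since $U$ is an irreducible variety of positive dimension over an uncountable field, a countable union of proper Zariski-closed subsets cannot exhaust its closed points, so $\bigcap_i V_i$ contains a closed point $x$. Because $k(x)$ lies in $\langle \{E_i\}\rangle_{n+1}$ and every object in this span is built from only finitely many $E_i$, there is a finite $S \subset I$ with $k(x) \in \langle E_S \rangle_{n+1}$, where $E_S := \bigoplus_{i \in S} E_i$. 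Since $x \in V_i$ for all $i \in S$, the cohomology sheaves of $E_S$ are locally free in some neighborhood of $x$, and a minimal-free-resolution argument shows $E_S \simeq \bigoplus_j \mathcal{H}^j(E_S)[-j]$ in such a neighborhood, as a direct sum of shifts of locally free sheaves.

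Now the ghost argument: for any $\xi \in \mathrm{Ext}^1(k(x), k(x)) = T^*_x X$, post-composition by $\xi$ sends $\mathrm{Hom}(E_S, k(x)[j]) = \mathrm{Hom}(\mathcal{H}^{-j}(E_S), k(x))$ into the summand $\mathrm{Ext}^1(\mathcal{H}^{-j}(E_S), k(x))$ of $\mathrm{Hom}(E_S, k(x)[j+1])$, which vanishes because $\mathcal{H}^{-j}(E_S)$ is locally free near $x$. Hence every element of $T^*_x X$ is an $E_S$-ghost. If $n < d$, pick $n+1$ linearly independent $\xi_1, \ldots, \xi_{n+1} \in T^*_x X$ (of dimension $d$); their composition is nonzero in $\mathrm{Ext}^{n+1}(k(x), k(x)) = \bigwedge^{n+1} T^*_x X$, yielding a nonzero composition of $n+1$ $E_S$-ghosts out of $k(x)$. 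This contradicts the ghost lemma applied to $k(x) \in \langle E_S \rangle_{n+1}$, so $n \geq d$. The main obstacle is the Baire-style step of producing a single $x$ in $\bigcap_i V_i$; this is precisely where uncountability of $k$ is essential, consistent with the remark preceding the proposition showing that the conclusion fails over countable fields.
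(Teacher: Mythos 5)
Your argument is correct in substance and its first half is exactly the paper's: use uncountability of $k$ to find a single closed point $x$ of maximal local dimension at which \emph{all} of the countably many generators have locally free cohomology sheaves. Where you diverge is the endgame. The paper passes to the stalk, notes that $(E_i)_x \in \langle \mathcal{O}_{X,x}\rangle_1$ because a bounded complex with free cohomology modules splits, concludes $\kappa(x) \in \langle \mathcal{O}_{X,x}\rangle_{n+1}$, and then simply cites Rouquier's Proposition 7.14, which gives the lower bound $n \geq \dim \mathcal{O}_{X,x}$ at \emph{any} closed point, with no regularity hypothesis. You instead reprove that lower bound from scratch via the ghost lemma and the Koszul computation $\mathrm{Ext}^*_{\mathcal{O}_{X,x}}(k(x),k(x)) \cong \bigwedge^* \mathrm{Ext}^1$, which buys a self-contained proof but forces you to choose a nonsingular point. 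That causes the one real (though easily repaired) inaccuracy: over an imperfect uncountable field (e.g.\ an uncountable purely transcendental extension of $\mathbf{F}_p$), a reduced finite type scheme need not contain a dense open \emph{smooth} subscheme, so your claim that such a $U$ exists ``since $X$ is reduced'' is false as stated; you should take $U$ inside the \emph{regular} locus, which is open and dense because $X$ is reduced and excellent, and regularity of $\mathcal{O}_{X,x}$ is all the Koszul/exterior-algebra argument needs. Two further cosmetic points: $\mathrm{Ext}^1(k(x),k(x))$ is the tangent space $(\mathfrak{m}_x/\mathfrak{m}_x^2)^\vee$ rather than the cotangent space (harmless, as only its $k(x)$-dimension $d$ and the exterior-algebra structure matter), and the reduction to a finite subfamily $E_S$ is fine but not needed, since ghostness was already verified against every $E_i$; the paper's stalkwise formulation avoids both issues.
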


\begin{proof}
Compare to the proof of \cite[Proposition 7.17]{rouquier_2008}. Let $n = \mathrm{CRdim}(D^b_{coh}(X))$ and let $\{E_i\}_{i \in I}$ be a countable family of objects such that $D^b_{coh}(X) = \langle \{E_i\}_{i \in I} \rangle_{n+1}$. Consider the set of closed points $x \in X$ such that for every $i \in I$, the cohomology modules of $(E_i)_x$ are free $\mathcal{O}_{X,x}$-modules. Since a variety over $k$ is not a countable union of proper closed subsets (\cite[Exercise 2.5.10]{Liu}), the set contains a closed point $x$ such that $\mathrm{dim}(\mathcal{O}_{X,x}) = \mathrm{dim}(X)$. We have $(E_i)_x \in \langle \mathcal{O}_{X,x} \rangle_1$ for each $i$ since a complex with projective cohomology modules is decomposable, hence
$$
\kappa (x) \in \langle \{(E_i)_x\}_{i \in I} \rangle _{n+1} \subset \langle \mathcal{O}_{X,x} \rangle _{n+1},
$$
hence $n \geq \mathrm{dim}(X)$ by \cite[Proposition 7.14]{rouquier_2008}.
\end{proof}

\begin{remark}
Since the countable Rouquier dimension only gives the expected answer for varieties over a sufficiently large field, Orlov suggests an alternative notion of countable Rouquier dimension which makes sense for a $k$-linear dg-category $\mathcal{A}$ with $k$ a field: Take the smallest $n$ such that there exists a countable family $\{E_i\}_i$ of objects of $\mathcal{A}$ such that $\mathcal{A}_K = \langle \{(E_i)_K\}_i \rangle _{n+1}$ for every field extension $K/k$.
\end{remark}

Finally, we prove our main result:

\begin{thm}
\label{thm-main}
Let $k$ be a field. Let $X, Y$ be smooth proper varieties over $k$. Assume there exists a fully faithful, exact, $k$-linear functor $F: D^b_{coh}(X) \to D^b_{coh}(Y)$. Then $\mathrm{dim}(X) \leq \mathrm{dim}(Y)$.
\end{thm}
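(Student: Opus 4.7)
The plan is to sandwich $\mathrm{dim}(X)$ and $\mathrm{dim}(Y)$ between countable Rouquier dimensions, combining Theorem \ref{thm-paper}, Proposition \ref{prop-complete}, and Lemma \ref{lemma-admissible}. The argument naturally splits into two parts: the case when $k$ is uncountable (which is the heart of the matter) and a reduction to that case via base change.

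In the uncountable case, I would first argue that the essential image of $F$ is an admissible subcategory of $D^b_{coh}(Y)$ equivalent to $D^b_{coh}(X)$. Since $X$ is smooth and proper, $D^b_{coh}(X)$ is saturated by Bondal--Van den Bergh, so any fully faithful $k$-linear exact functor out of $D^b_{coh}(X)$ admits a right adjoint (and, via Serre duality on $X$, a left adjoint as well); the essential image of $F$ is therefore admissible in $D^b_{coh}(Y)$. Lemma \ref{lemma-admissible} then yields $\mathrm{CRdim}(D^b_{coh}(X)) \leq \mathrm{CRdim}(D^b_{coh}(Y))$. Since smooth proper varieties are separated, and hence have affine diagonal, Theorem \ref{thm-paper} applied to $Y$ gives $\mathrm{CRdim}(D^b_{coh}(Y)) \leq \mathrm{dim}(Y)$, while Proposition \ref{prop-complete} applied to $X$ gives $\mathrm{dim}(X) \leq \mathrm{CRdim}(D^b_{coh}(X))$. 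Chaining these inequalities produces $\mathrm{dim}(X) \leq \mathrm{dim}(Y)$.

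For arbitrary $k$, I would reduce to the uncountable case by base changing along an uncountable field extension $L/k$ (which always exists, e.g.\ $L = k(\{t_\alpha\}_{\alpha \in I})$ for any uncountable $I$). By Orlov's representability theorem (extended to the smooth proper setting by Ballard, or via the dg-enhancement of To\"en), there is a kernel $P \in D^b_{coh}(X \times_k Y)$ with $F \cong \Phi_P$. Fully faithfulness of $\Phi_P$ is equivalent to an isomorphism of the form $P^\vee \star P \cong \Delta_* \mathcal{O}_X$ on $X \times_k X$, and such an isomorphism is preserved by the flat base change $(-) \otimes_k L$; hence $\Phi_{P_L}$ is a fully faithful, $L$-linear, exact functor $D^b_{coh}(X_L) \to D^b_{coh}(Y_L)$ between smooth proper $L$-varieties. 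Applying the uncountable case together with $\mathrm{dim}(X_L) = \mathrm{dim}(X)$ and $\mathrm{dim}(Y_L) = \mathrm{dim}(Y)$ finishes the argument. The main obstacle is precisely this base-change step: promoting $F$ to an $L$-linear fully faithful functor seems to require representability by a Fourier--Mukai kernel, and I do not see a direct way to show that the inequality $\mathrm{CRdim}(D^b_{coh}(X)) \leq \mathrm{CRdim}(D^b_{coh}(Y))$ (valid over $k$) survives base change without passing through kernels.
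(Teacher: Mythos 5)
Your proposal is correct and follows essentially the same route as the paper: represent $F$ by a Fourier--Mukai kernel, base change the kernel (and the isomorphism of kernels expressing full faithfulness) to an uncountable field, note the image is admissible by smoothness and properness, and sandwich the dimensions via Theorem \ref{thm-paper}, Proposition \ref{prop-complete}, and Lemma \ref{lemma-admissible}. The only cosmetic differences are that the paper runs the argument uniformly over the uncountable extension rather than splitting into cases, and cites the author's own representability theorem for fully faithful functors between smooth proper varieties in place of Ballard/To\"en.
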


\begin{proof}
Choose any uncountable extension field $K/k$. By \cite[Theorem 1]{olander2020orlovs}, $F$ is the Fourier--Mukai transform with respect to a kernel $E \in D^b_{coh}(X \times _k Y)$. Then $E_K$ gives rise to a functor $F_K : D^b_{coh}(X_K) \to D^b_{coh}(Y_K)$ which remains fully faithful: The fact that $F$ is fully faithful may be rephrased as $R \circ F \cong \mathrm{id}$ where $R$ is the right adjoint of $F$. By the calculus of kernels, this may be rephrased as the existence of an isomorphism
\begin{equation}
\label{equation-isomorphism}
\mathbf{R}pr_{13 *} (\mathbf{L}pr_{12}^*(E) \otimes _{\mathcal{O}_{X \times Y \times X}} ^{\mathbf{L}} \mathbf{L} pr_{23}^* (E')) \cong \mathcal{O}_{\Delta}
\end{equation}
where $E' = \mathbf{R} \mathcal{H}om _{\mathcal{O}_{X \times Y}} (E, \mathbf{L}pr_1 ^* (\omega _X))[\mathrm{dim}(X)]$, viewed as an object of $D^b_{coh}(Y\times _k X)$, is the kernel of $R$, and (\ref{equation-isomorphism}) remains valid upon base change to $K$.

Now $F_K$ is the inclusion of an admissible subcategory since $X,Y$ are smooth and proper (\cite[\href{https://stacks.math.columbia.edu/tag/0FYN}{Tag 0FYN}]{stacks-project}). We have $\mathrm{CRdim}(D^b_{coh}(X_K)) = \mathrm{dim}(X_K) = \mathrm{dim}(X)$ by Theorem \ref{thm-paper} and Proposition \ref{prop-complete} and similarly for $Y$. Thus by Lemma \ref{lemma-admissible}, we have
$$
\mathrm{dim}(X) = \mathrm{CRdim}(D^b_{coh}(X_K)) \leq \mathrm{CRdim}(D^b_{coh}(Y_K)) = \mathrm{dim}(Y),
$$
as needed.
\end{proof}

\bibliographystyle{alpha}
\bibliography{references}

\begin{thebibliography}{{Sta}18}

\bibitem[Liu02]{Liu}
Qing Liu.
\newblock {\em Algebraic geometry and arithmetic curves}, volume~6 of {\em
  Oxford Graduate Texts in Mathematics}.
\newblock Oxford University Press, Oxford, 2002.
\newblock Translated from the French by Reinie Ern{\'e}, Oxford Science
  Publications.

\bibitem[Ola20]{olander2020orlovs}
Noah Olander.
\newblock Orlov's theorem in the smooth proper case, 2020.
\newblock arXiv:2006.15173.

\bibitem[Ola21]{olander2021rouquier}
Noah Olander.
\newblock The rouquier dimension of quasi-affine schemes, 2021.
\newblock arXiv:2108.12005.

\bibitem[Orl09]{Orl09}
Dmitri Orlov.
\newblock Remarks on generators and dimensions of triangulated categories.
\newblock {\em Moscow Mathematical Journal}, 9:143--149, 2009.

\bibitem[Rou08]{rouquier_2008}
Raphaël Rouquier.
\newblock Dimensions of triangulated categories.
\newblock {\em Journal of K-Theory}, 1(2):193–256, 2008.

\bibitem[{Sta}18]{stacks-project}
The {Stacks Project Authors}.
\newblock \textit{Stacks Project}.
\newblock \url{https://stacks.math.columbia.edu}, 2018.

\end{thebibliography}

\textsc{Columbia University Department of Mathematics, 2990 Broadway, New York, NY 10027}

\href{mailto:nolander@math.columbia.edu}{nolander@math.columbia.edu}

\end{document}